\newtheorem{theorem}{Theorem}
\newtheorem{acknowledgement}[theorem]{Acknowledgement}
\newtheorem{definition}[theorem]{Definition}
\newtheorem{lemma}[theorem]{Lemma}
\newenvironment{proof}[1][Proof]{\noindent\textbf{#1.} }{\ \rule{0.5em}{0.5em}}
\begin{document}

\title{Existence and uniqueness of solution for the stochastic nonlinear
diffusion equation of plasma}
\author{Ioana Ciotir \\
Departament of Mathematics, \\
Faculty of Economics and Business Administration \\
"Al.I. Cuza" University, \\
Bd. Carol no. 9-11, Iasi, Romania}
\maketitle

\begin{abstract}
In this paper we are concerned with the stochastic partial differential
equations of superfast diffusion processes describing behavior of plasma $%
dX\left( t\right) -\Delta ~sign\left( X\left( t\right) \right) \ln \left( | X\left( t\right)|
+1\right)  dt=\sqrt{Q}%
dW\left( t\right) ,$ in $\left( 0,T\right) \times \mathcal{O}$, where $%
\mathcal{O}$ is a bounded open interval of $\mathbb{R}$.
We define a strong solution adequate to the properties of the natural logarithm
and we prove the corresponding existence and uniqueness result.
\end{abstract}

\textbf{Key word}: stochastic PDE's, monotone operators, super-fast
diffusion, plasma physics

MSC[2000] 76S05, 60H15, 82D10

\section*{Introduction}

Consider a nonlinear diffusion process of the following form%
\begin{equation}
dX\left( t\right) =\Delta \ln \left( X\left( t\right) +1\right) dt
\label{proces}
\end{equation}%
where $X\left( t,\xi \right) $ is the density for the time - space
coordinates $\left( t,\xi \right) .$ This equation describes the process
that has been observed during experiments using Wisconsin toroidal octupole
plasma containment device (see \cite{ehrhardt}). Kamimura and Dawson
predicted in \cite{Kamimura}\ this process for cross-field conservative
diffusion of plasma including mirror effects.

The same equation describes the expansion of a thermalized electron cloud
and arises also in studies of the central limit approximation to Carleman's
model of the Boltzmann equation (see \cite{carleman} and \cite{Kurtz}). The
asymptotic behavior of this equation was studied in \cite{berryman}. Most of
the natural phenomena exhibit variability which cannot be modeled by using
deterministic approaches. More accurately, natural systems can be
represented as stochastic models and the deterministic description can be
considered as the subset of the pertinent stochastic models.

The purpose of this paper is to analyze such equations within the framework
of stochastic evolution equations with (\ref{proces}) as underlying
motivating example. Let us now introduce the suitable framework for this
problem.

\textbf{Notation}

Let $\mathcal{O}$ be a bounded open interval of $\mathbb{R}$. Recall the
distribution spaces $H_{0}^{1}\left( \mathcal{O}\right) $ on $\mathcal{O}$\
and it's dual $H^{-1}\left( \mathcal{O}\right) $ with the scalar product and
the norm given by
\begin{equation*}
\left\langle u,v\right\rangle _{-1}=\left( \left( -\Delta \right)
^{-1}u,v\right) _{2},\quad \left\vert u\right\vert _{-1}=\left( \left(
-\Delta \right) ^{-1}u,u\right) _{2}^{1/2},
\end{equation*}%
respectively, where $\left( \cdot ,\cdot \right) _{2}$ is the pairing
between $H_{0}^{1}\left( \mathcal{O}\right) $ and $H^{-1}\left( \mathcal{O}%
\right) $ and the scalar product of $L^{2}\left( \mathcal{O}\right) .$ The
norm in $L^{p}\left( \mathcal{O}\right) $, $1\leq p\leq \infty $ will be
denoted by $\left\vert \cdot \right\vert _{p}.$

Given a Hilbert space $U$, the norm of $U$ will be denoted by $\left\vert
\cdot \right\vert _{U}$ and the scalar product by $\left( \cdot ,\cdot
\right) _{U}.$ By $C\left( \left[ 0,T\right] ;U\right) $ we shall denote the
space of $U-$ valued continuous functions on $\left[ 0,T\right] $ and by $%
C_{W}\left( \left[ 0,T\right] ;L^{2}\left( \Omega ,\mathcal{F},\mathbb{P}%
;U\right) \right) $ the space of all $U$-valued adapted stochastic processes
with respect to filtration $\mathcal{F}$ of the probability space, which are
mean square continuous.

\textbf{Formulation of the problem and hypotheses}

The main result is an existence and uniqueness theorem for the following
stochastic nonlinear diffusion equations in $H^{-1}\left( \mathcal{O}\right)
$ with additive noise%
\begin{equation}
\left\{
\begin{array}{ll}
dX\left( t\right) -\Delta ~sign\left( X\left( t\right) \right) \ln \left( |
X\left( t\right)| +1\right) dt=\sqrt{Q}dW\left( t\right) ,~ & in~\left(
0,T\right) \times \mathcal{O}, \\
sign\left( X\left( t\right) \right) \ln \left( | X\left( t\right)| +1\right)
=0, & on~\left( 0,T\right) \times \partial \mathcal{O}, \\
X\left( 0\right) =x, & in~\mathcal{O},%
\end{array}%
\right.  \label{equ}
\end{equation}%
where $\mathcal{O}$\ is an open bounded interval of $\mathbb{R}$, $x$ is an
initial datum and

\begin{equation*}
sign\left( x\right) =\left\{
\begin{array}{l}
\dfrac{x}{\left\vert x\right\vert },\quad \text{if \ }x\neq 0; \\
\left[ -1,1\right] ,\quad \text{if \ }x=0.%
\end{array}%
\right.
\end{equation*}

Here $W\left( t\right) $ is a cylindrical Wiener process on $L^{2}\left(
\mathcal{O}\right) $ of the form
\begin{equation*}
W\left( t\right) =\sum_{k=1}^{\infty }\beta _{k}\left( t\right) e_{k},\quad
t\geq 0
\end{equation*}%
for $\left\{ \beta _{k}\right\} $\ a sequence of independent standard
Brownian motion on\ a filtered probability space $\left( \Omega ,\mathcal{F}%
,\left\{ \mathcal{F}_{t}\right\} _{t\geq 0},\mathbb{P}\right) $ and $\left\{
e_{k}\right\} $ is a complete orthonormal system in $L^{2}\left( \mathcal{O}%
\right) $ of eigenfunctions of $-\Delta $ with Dirichlet homogeneous
boundary conditions. We denote by $\left\{ \lambda _{k}\right\} $ the
corresponding sequence of eigenvalues. The operator $Q\in L\left(
L^{2}\left( \mathcal{O}\right) \right) $ defined by
\begin{equation*}
\sqrt{Q}h=\sum_{k=1}^{\infty }\gamma _{k}\left\langle h,e_{k}\right\rangle
_{2}e_{k},\quad \forall ~h\in L^{2}\left( \mathcal{O}\right)
\end{equation*}%
where $\left\{ \gamma _{k}\right\} $\ is a sequence of positive numbers, is
symmetric, self-adjoint and nonnegative. Then the random forcing term is
\begin{equation*}
\sqrt{Q}dW\left( t\right) =\sum_{k=1}^{\infty }\gamma _{k}e_{k}d\beta
_{k}\left( t\right) ,\quad t\geq 0.
\end{equation*}

Because $\Psi :\mathbb{R} \rightarrow
\mathbb{R}
,$ $\Psi \left( x\right) =sign\left( x \right) \ln \left(| x| +1\right) $ is
a maximal monotone operator, we can see that the operator defined by $%
A\left( x\right) =-\Delta \Psi \left( x\right) ,$ for all $x\in D\left(
A\right) ,$ where $D\left( A\right) =\left\{ x\in H^{-1}\left( \mathcal{O}%
\right) \cap L^{1}\left( \mathcal{O}\right) ;~\Psi \left( x\right) \in
H_{0}^{1}\left( \mathcal{O}\right) \right\} $ is maximal monotone in $%
H^{-1}\left( \mathcal{O}\right) \times H^{-1}\left( \mathcal{O}\right) $
(see \cite{brezis.operatori}).

In this paper we shall assume that the sequence $\left\{ \gamma _{k}\right\}
$ is such that

\begin{itemize}
\item[$\left( H_{1}\right) $] $\sum_{k=1}^{\infty }\gamma _{k}^{2}\lambda
_{k}^{2}<\infty $ and $\sum_{k=1}^{\infty }\gamma _{k}\lambda
_{k}^{3}<\infty $;
\end{itemize}

and

\begin{itemize}
\item[$\left( H_{2}\right) $] $\sqrt{Q}W\in C\left( \left[ 0,T\right] \times
\overline{\mathcal{O}}\right) $ $\mathbb{P}-a.s..$
\end{itemize}

Note that, for every $\omega $ fixed, we have, for some constant $C,$ that%
\begin{equation}
\underset{s\in \left[ 0,T\right] }{\sup }\left\vert \sqrt{Q}W\left( s\right)
\right\vert _{L^{\infty }\left( \mathcal{O}\right) }\leq C.  \label{spatiu}
\end{equation}

A similar result was proven in \cite{parab} for semilinear parabolic
stochastic equations and in \cite{strong} for porous media stochastic
equations.\bigskip

Denote by $g:\mathbb{R} \rightarrow \mathbb{R},\ $%
\begin{equation*}
g\left( x\right) =\left( |x|+1\right) \ln \left( |x|+1\right) -|x|,
\end{equation*}%
and note that, in our case, $\partial g=\Psi .$

\begin{definition}
\label{def}An adapted stochastic process
\begin{equation*}
X\in C_{W}\left( \left[ 0,T\right] ;H^{-1}\left( \mathcal{O}\right) \right)
\cap L^{p}\left( \left( 0,T\right) \times \mathcal{O\times }\Omega \right)
,\quad p\geq 4,
\end{equation*}%
is said to be a solution to equation (\ref{equ}) if%
\begin{eqnarray*}
&&\frac{1}{2}\left\vert X\left( t\right) -\sqrt{Q}W\left( t\right) -Z\left(
t\right) \right\vert _{-1}^{2}+\int_{0}^{t}\int_{\mathcal{O}}g\left( X\left(
s\right) \right) d\xi ds\medskip \\
&&\quad \quad \quad \quad +\int_{0}^{t}\int_{\mathcal{O}}\left( -\Delta
\right) ^{-1}\frac{\partial }{\partial s}Z\left( s\right) \left( X\left(
s\right) -\sqrt{Q}W\left( s\right) -Z\left( s\right) \right) d\xi ds\medskip
\\
&\leq &\frac{1}{2}\left\vert x-Z\left( 0\right) \right\vert
_{-1}^{2}+\int_{0}^{t}\int_{\mathcal{O}}g\left( Z\left( s\right) +\sqrt{Q}%
W\left( s\right) \right) d\xi ds,\quad \mathbb{P-}a.s.
\end{eqnarray*}%
for every starting point $x\in L^{p}\left( \mathcal{O}\right) $ and for all
adapted stochastic processes
\begin{equation*}
Z\in C_{W}\left( \left[ 0,T\right] ;H^{-1}\left( \mathcal{O}\right) \right)
,
\end{equation*}%
such that, for every $\omega \in \Omega $ fixed, satisfies

\begin{itemize}
\item[i)] $Z\in C\left( \left[ 0,T\right] ;L^{2}\left( \mathcal{O}\right)
\right) ;\medskip $

\item[ii)] $Z^{\prime }\in L^{2}\left( 0,T;H^{-1}\left( \mathcal{O}\right)
\right) ;\medskip $

\item[iii)] $g\left( Z+\sqrt{Q}W\right) \in L^{1}\left( \left( 0,T\right)
\times \mathcal{O}\right) .\medskip $
\end{itemize}
\end{definition}

Definition 1 resembles the classical definition of a mild (integral)
solution to deterministic variational inequality (see, e.g., \cite{analys}).
For stochastic differential equations a slightly different version was used
in \cite{BDPR} and \cite{AR}. We can easily see that a solution in the sense
of [\cite{concise}, Definition 4.2.1] is also a solution in the sense of
Definition \ref{def} above.

\textbf{Context}

Existence results for equation
\begin{equation*}
\left\{
\begin{array}{ll}
dX\left( t\right) -\Delta \Psi \left( X\left( t\right) \right) dt=\sqrt{Q}%
dW\left( t\right) ,~ & in~\left( 0,T\right) \times \mathcal{O}, \\
\Psi \left( X\left( t\right) \right) =0, & on~\left( 0,T\right) \times
\partial \mathcal{O}, \\
X\left( 0\right) =x, & in~\mathcal{O},%
\end{array}%
\right.
\end{equation*}%
were obtained in \cite{nonneg} for$\ \Psi $ monotonically increasing,
continuous, with $\Psi \left( 0\right) =0,$ and satisfying the following
growth conditions
\begin{equation*}
\Psi ^{\prime }\left( r\right) \leq \alpha _{1}\left\vert r\right\vert
^{m-1}+\alpha _{2}\text{ and }\int_{0}^{r}\Psi \left( s\right) ds\geq \alpha
_{3}\left\vert r\right\vert ^{m+1}+\alpha _{4},
\end{equation*}%
for all $r\in \mathbb{R}$, where $\alpha _{1},\alpha _{2},\alpha _{4}\geq
0,~\alpha _{3}>0$ and $m\geq 1.$ This result was generalized in \cite%
{criticality}. Note that our case is not covered by those hypotheses.

An other existence result was proved in \cite{fast} for the operator
\begin{equation*}
\Psi \left( r\right) =sign\left( r\right) \left\vert r\right\vert ^{\theta
-1}\left( \log \left( \left\vert r\right\vert +1\right) \right) ^{s},
\end{equation*}%
$r\in \mathbb{R}$ and $\theta \in \left( 1,\infty \right) ,$ $s\in \left[
1,\infty \right) $ (see \cite{fast} Example 3.5). \newline
In the present paper we are considering the critical case $\theta =1$ which
was not covered, by using a different approach and a different definition of
the solution.

\section{The main result}

The main result of this work is the following

\begin{theorem}
\label{theorem} For all $x\in L^{p}\left( \mathcal{O}\right) ,$ $p\geq 4,$
equation (\ref{equ}) has an unique solution in the sense of Definition \ref%
{def}.
\end{theorem}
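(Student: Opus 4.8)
The plan is to follow the classical rescaling strategy that converts the stochastic equation into a pathwise random equation, combined with an approximation of the non-coercive operator $A$, uniform a priori estimates, and a passage to the limit at the level of the variational inequality in Definition~\ref{def}. First I would remove the noise by setting $Y(t)=X(t)-\sqrt{Q}W(t)$, which formally turns (\ref{equ}) into the random PDE $\frac{d}{dt}Y(t)=\Delta\,\Psi\!\left(Y(t)+\sqrt{Q}W(t)\right)$, $Y(0)=x$, to be solved for each fixed $\omega\in\Omega$; by $(H_1)$, $(H_2)$ and (\ref{spatiu}) the perturbation $\sqrt{Q}W$ has the spatial regularity that makes this meaningful. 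Since $A=-\Delta\Psi$ is maximal monotone in $H^{-1}(\mathcal{O})\times H^{-1}(\mathcal{O})$ but \emph{not} coercive (because $\Psi(r)r=|r|\ln(|r|+1)$ grows slower than any power of $|r|$), I would approximate it by the coercive operators $A_{\varepsilon}=-\Delta\Psi_{\varepsilon}$ with $\Psi_{\varepsilon}(r)=\Psi(r)+\varepsilon r$ (equivalently, one may use the Yosida approximations of $\Psi$). For each $\varepsilon>0$ the perturbed random equation then has a unique solution $Y_{\varepsilon}$ by the theory of evolution equations governed by subdifferentials / maximal monotone operators in Hilbert space, so $X_{\varepsilon}=Y_{\varepsilon}+\sqrt{Q}W$ is well defined and, by It\^o's formula and the measurability of the data, adapted.

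Next I would establish a priori bounds uniform in $\varepsilon$. Applying the chain rule to $t\mapsto\tfrac12|Y_{\varepsilon}(t)|_{-1}^2$ and using $\partial g=\Psi$ together with the monotonicity inequality for $A_{\varepsilon}$ yields, after estimating the cross terms containing $\sqrt{Q}W$ and $Z$ by means of (\ref{spatiu}), a bound of exactly the shape of the inequality in Definition~\ref{def} with $X_{\varepsilon}$ in place of $X$; in particular $\int_0^T\!\!\int_{\mathcal{O}}g(X_{\varepsilon})\,d\xi\,ds$ and $\sup_{t\in[0,T]}|Y_{\varepsilon}(t)|_{-1}$ stay bounded. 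A second estimate, obtained by multiplying the approximate equation by $|X_{\varepsilon}|^{p-2}X_{\varepsilon}$ (this is where $x\in L^{p}(\mathcal{O})$ with $p\ge4$ and the $L^{\infty}$ regularity of the noise enter), produces a uniform bound for $X_{\varepsilon}$ in $L^{p}((0,T)\times\mathcal{O}\times\Omega)$. Together, the $L^{p}$ bound and the $L\log L$-type bound coming from $g$ give uniform integrability of $\{X_{\varepsilon}\}$, hence weak $L^1$ (indeed weak $L^{p}$) compactness.

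I would then extract a subsequence with $X_{\varepsilon}\rightharpoonup X$ in $L^{p}((0,T)\times\mathcal{O}\times\Omega)$ and $Y_{\varepsilon}\to Y$ in $C([0,T];H^{-1}(\mathcal{O}))$, identify $X=Y+\sqrt{Q}W$, and pass to the limit: the terms linear in $X_{\varepsilon}$ pass by weak convergence, while for $\tfrac12|Y_{\varepsilon}(t)|_{-1}^2$ and the convex functional $v\mapsto\int_0^t\!\!\int_{\mathcal{O}}g(v)\,d\xi\,ds$ I would invoke weak lower semicontinuity; this delivers, for every admissible $Z$ satisfying (i)--(iii), precisely the inequality of Definition~\ref{def}, so $X$ is a solution. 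For uniqueness, given two solutions $X_1,X_2$ I would use monotonicity of $\Psi$: formally $\frac{d}{dt}\tfrac12|X_1-X_2|_{-1}^2=-\big(\Psi(X_1)-\Psi(X_2),X_1-X_2\big)_2\le0$, so $|X_1(t)-X_2(t)|_{-1}\le|x-x|_{-1}=0$; to justify this inside the variational-inequality framework I would show that every solution in the sense of Definition~\ref{def} arises as a limit of the above approximation scheme, which is unique at each level $\varepsilon$ and stable as $\varepsilon\to0$, whence any two solutions coincide.

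I expect the main obstacle to be exactly the absence of coercivity: because $\Psi$ grows only logarithmically, one controls the solution solely through the $L\log L$-type functional $g$ rather than through an $L^{m+1}$ norm, so the uniform a priori estimates, the choice of an approximation that simultaneously admits a clean existence theory and survives the passage to the limit, and the weak-$L^1$ compactness step (Dunford--Pettis) must all be arranged with care; controlling the cross term $\int_0^t\!\!\int_{\mathcal{O}}(-\Delta)^{-1}\partial_sZ\,(X-\sqrt{Q}W-Z)\,d\xi\,ds$ under the meagre regularity available for $Z$ is the other delicate point.
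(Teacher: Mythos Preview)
Your existence outline is broadly aligned with the paper, but it has a real gap at the compactness step. From the bounds you list (boundedness of $\sup_t|Y_\varepsilon(t)|_{-1}$, of $\int_0^T\!\int_{\mathcal O}g(X_\varepsilon)$, and of $X_\varepsilon$ in $L^p$) you cannot conclude that $Y_\varepsilon\to Y$ in $C([0,T];H^{-1}(\mathcal O))$, nor even that $Y_\varepsilon(t)\to Y(t)$ strongly in $H^{-1}(\mathcal O)$ for each fixed $t$; yet you need the latter both to pass to the liminf in $\tfrac12|Y_\varepsilon(t)-Z(t)|_{-1}^2$ and to handle the cross term $\int_0^t\langle Z'(s),Y_\varepsilon(s)-Z(s)\rangle_{-1}\,ds$. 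What is missing is an estimate on the time variation of $Y_\varepsilon$: the paper obtains an $\varepsilon$-uniform bound $\int_0^T\!\int_{\mathcal O}|\Psi_\varepsilon(Y_\varepsilon+\sqrt{Q}W)|\,d\xi\,ds\le C$ (Lemma~\ref{lemma}), via a subdifferential inequality for $g_\varepsilon$ and a careful splitting of the noise term using $(H_2)$; this gives $\partial_t(-\Delta)^{-1}Y_\varepsilon$ bounded in $L^1(0,T;L^1(\mathcal O))\subset L^1(0,T;H^{-1}(\mathcal O))$, hence a bounded-variation estimate, and then Helly--Foia\c{s} together with the compact embedding $H_0^1(\mathcal O)\hookrightarrow H^{-1}(\mathcal O)$ yields the required pointwise-in-$t$ strong $H^{-1}$ convergence. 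Without this step your passage to the limit is not justified.

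The uniqueness argument has a more serious problem. The formal identity $\tfrac{d}{dt}\tfrac12|X_1-X_2|_{-1}^2=-(\Psi(X_1)-\Psi(X_2),X_1-X_2)_2$ is not available for solutions in the sense of Definition~\ref{def}, and your proposed remedy---showing that every such solution is a limit of the approximation scheme---is essentially the statement of uniqueness itself, so the argument is circular. The paper proceeds very differently: given an arbitrary solution $\overline X$, it inserts the specific test process $Z=J_\mu Y_\varepsilon=(1-\mu\Delta)^{-1}Y_\varepsilon$ into the variational inequality defining $\overline X$, uses the approximate equation (\ref{aprox3}) to rewrite the term containing $\partial_s Z$, and then performs a double limit, first $\varepsilon\to0$ (using the compactness of $J_\mu$ on $L^2$) and then $\mu\to0$ (using Egorov and dominated convergence for the $g$-terms, and $J_\mu\sqrt{Q}W\to\sqrt{Q}W$ in $L^\infty(0,T;H_0^1)$). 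This choice of $Z$ is the key idea that links an arbitrary Definition~\ref{def} solution back to the constructed one, and it is absent from your plan.
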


In order to prove this result we need some estimates that will be used for
both existence and uniqueness.

\textbf{A priori Estimates}

Denote by $\Psi :\mathbb{R}\rightarrow 2^{\mathbb{R}},~\Psi \left( x\right)
=sign\left( x\right) \ln \left( |x|+1\right) .$ Since $\Psi $ is a maximal
monotone operator we can consider the following approximating equation%
\begin{equation}
\left\{
\begin{array}{ll}
dX_{\varepsilon }\left( t\right) -\Delta \overline{\Psi }_{\varepsilon
}\left( X_{\varepsilon }\left( t\right) \right) dt=\sqrt{Q}dW\left( t\right)
,~ & in~\left( 0,T\right) \times \mathcal{O}, \\
\overline{\Psi }_{\varepsilon }\left( X_{\varepsilon }\left( t\right)
\right) =0, & on~\left( 0,T\right) \times \partial \mathcal{O}, \\
X_{\varepsilon }\left( 0\right) =x, & in~\mathcal{O},%
\end{array}%
\right.  \label{approx1}
\end{equation}%
where $\overline{\Psi }_{\varepsilon }\left( x\right) =\Psi _{\varepsilon
}\left( x\right) +\varepsilon x,~$for all $x\in \mathbb{R},$ and $\Psi
_{\varepsilon }$ is the Yosida approximation of $\Psi ,$ i.e.,%
\begin{equation*}
\Psi _{\varepsilon }\left( x\right) =\frac{1}{\varepsilon }\left( 1-\left(
1+\varepsilon \Psi \right) ^{-1}\right) \left( x\right) =\Psi \left( \left(
1+\varepsilon \Psi \right) ^{-1}x\right)
\end{equation*}%
for all $\varepsilon >0.$ We can take this approximation since $\Psi $ is a
maximal monotone operator (see e.g., \cite{analys}, \cite{new}).\newline
For each $\varepsilon >0$ fixed, equation (\ref{approx1}) has an unique
solution in the sense of \cite{infinite} or [\cite{concise}, Definition
4.2.1] (see Example 4.1.11 from \cite{concise}). Note that solution $%
X_{\varepsilon }$ to the approximation equation (\ref{approx1}) is in our
case a path-wise continuous, $H^{-1}\left( \mathcal{O}\right) $ - valued, $%
\left( \mathcal{F}_{t}\right) $ - adapted stochastic process. Clearly, this
is also solution in the sense of Definition \ref{def}.

Setting
\begin{equation*}
Y_{\varepsilon }\left( t\right) =X_{\varepsilon }\left( t\right) -\sqrt{Q}%
W\left( t\right) ,
\end{equation*}%
we may rewrite (\ref{approx1}) as a random equation%
\begin{equation}
\left\{
\begin{array}{ll}
dY_{\varepsilon }\left( t\right) -\Delta \overline{\Psi }_{\varepsilon
}\left( Y_{\varepsilon }\left( t\right) +\sqrt{Q}W\left( t\right) \right)
dt=0, & in~\left( 0,T\right) \times \mathcal{O}, \\
\overline{\Psi }_{\varepsilon }\left( Y_{\varepsilon }\left( t\right) +\sqrt{%
Q}W\left( t\right) \right) =0, & on~\left( 0,T\right) \times \partial
\mathcal{O}, \\
Y_{\varepsilon }\left( 0\right) =x, & in~\mathcal{O},%
\end{array}%
\right.  \label{approx2}
\end{equation}%
For each $\omega \in \Omega $ fixed, by classical existence theory for
nonlinear equation we have that equation (\ref{approx2}) has a unique
solution $Y_{\varepsilon }\in C\left( \left[ 0,T\right] ;L^{2}\left(
\mathcal{O}\right) \right) ,$ with $Y_{\varepsilon }^{\prime }\in
L^{2}\left( 0,T;H^{-1}\left( \mathcal{O}\right) \right) $ (see \cite{lions}
for the general result and \cite{strong} for a similar case).

By the It\^{o} formula with the function $x\mapsto \left\vert x\right\vert
_{2}^{2^{{}}}$\ we get from (\ref{approx1}) that%
\begin{equation*}
\mathbb{E}\left\vert X_{\varepsilon }\left( t,x\right) \right\vert
_{2}^{2}\leq \left\vert x\right\vert _{2}^{2}+t\sum_{k=1}^{\infty }\lambda
_{k}^{2}\gamma _{k}^{2}
\end{equation*}%
and then, we get that, for each $\omega \in \Omega $ fixed, we have%
\begin{equation}
\left\vert X_{\varepsilon }\left( t\right) \right\vert _{L^{2}\left(
\mathcal{O}\right) }^{2}\leq C\left( \omega \right) ,  \label{ito1}
\end{equation}%
for all $t\in \left[ 0,T\right] ,$ with $C$ independent of $\varepsilon $.
By assumption \textbf{H}$_{2},$ we can assume the same estimate holds for $%
Y_{\varepsilon },$ i.e.,%
\begin{equation}
\left\vert Y_{\varepsilon }\left( t\right) \right\vert _{L^{2}\left(
\mathcal{O}\right) }^{2}=\left\vert X_{\varepsilon }\left( t\right) -\sqrt{Q}%
W\left( t\right) \right\vert _{L^{2}\left( \mathcal{O}\right) }^{2}\leq C,
\label{ito2}
\end{equation}%
for all $t\in \left[ 0,T\right] ,$ with $C$ independent of $\varepsilon .$

\begin{lemma}
\label{lemma}There exists a constant $C$ (independent of $\varepsilon $)
such that for all $\omega \in \Omega $ fixed, we have that%
\begin{equation*}
\int_{0}^{T}\int_{\mathcal{O}}\left\vert \Psi _{\varepsilon }\left(
Y_{\varepsilon }\left( s\right) +\sqrt{Q}W\left( s\right) \right)
\right\vert d\xi ds\leq C,
\end{equation*}%
for all $t\in \left[ 0,T\right] .$
\end{lemma}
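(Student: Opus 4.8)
The plan is to test the random equation \eqref{approx2} with the natural multiplier $\overline{\Psi}_\varepsilon(Y_\varepsilon(s)+\sqrt{Q}W(s))$ and integrate in $H^{-1}(\mathcal{O})$, i.e. to compute, for $\omega$ fixed,
\[
\int_0^t \Big\langle \tfrac{d}{ds}Y_\varepsilon(s),\, \overline{\Psi}_\varepsilon\big(Y_\varepsilon(s)+\sqrt{Q}W(s)\big)\Big\rangle_{-1}\, ds,
\]
which by the equation equals $-\int_0^t\!\!\int_{\mathcal{O}} \overline{\Psi}_\varepsilon(Y_\varepsilon+\sqrt{Q}W)\,\Delta\overline{\Psi}_\varepsilon(Y_\varepsilon+\sqrt{Q}W)\,d\xi\,ds = -\int_0^t|\nabla\overline{\Psi}_\varepsilon(Y_\varepsilon+\sqrt{Q}W)|_2^2\,ds\le 0$ after using the Dirichlet boundary condition. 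On the left side I would write $Y_\varepsilon = (Y_\varepsilon+\sqrt{Q}W) - \sqrt{Q}W$ and split the multiplier term accordingly. The first piece, $\int_0^t\langle \tfrac{d}{ds}(Y_\varepsilon+\sqrt{Q}W - \sqrt{Q}W),\,\overline{\Psi}_\varepsilon(Y_\varepsilon+\sqrt{Q}W)\rangle_{-1}\,ds$, is handled by observing that $\langle \dot u, \overline{\Psi}_\varepsilon(u)\rangle_{-1}$ is not the right pairing — instead I pair in $L^2$: since $\partial g_\varepsilon = \overline{\Psi}_\varepsilon$ for the appropriate regularized potential $g_\varepsilon(r)=\int_0^r\overline{\Psi}_\varepsilon(\tau)\,d\tau$, the chain rule gives $\frac{d}{ds}\int_{\mathcal{O}} g_\varepsilon(Y_\varepsilon(s)+\sqrt{Q}W(s))\,d\xi = \int_{\mathcal{O}} \overline{\Psi}_\varepsilon(Y_\varepsilon+\sqrt{Q}W)\,\frac{d}{ds}(Y_\varepsilon+\sqrt{Q}W)\,d\xi$. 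Care is needed here because $\sqrt{Q}W$ is only continuous, not differentiable in $s$; this is exactly why hypotheses $(H_1)$–$(H_2)$ are in force, and $\frac{d}{ds}Y_\varepsilon\in L^2(0,T;H^{-1})$ combined with $\overline{\Psi}_\varepsilon(Y_\varepsilon+\sqrt{Q}W)\in L^2(0,T;H_0^1)$ makes the $Y_\varepsilon$-part of the pairing legitimate, while the $\sqrt{Q}W$-part must be absorbed using $(H_1)$ as in \eqref{ito1}.

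Carrying this out yields an identity of the shape
\[
\int_{\mathcal{O}} g_\varepsilon\big(Y_\varepsilon(t)+\sqrt{Q}W(t)\big)\,d\xi + \int_0^t\big|\nabla\overline{\Psi}_\varepsilon(Y_\varepsilon+\sqrt{Q}W)\big|_2^2\,ds = \int_{\mathcal{O}} g_\varepsilon(x)\,d\xi + \text{(terms from $\sqrt{Q}W$)},
\]
where the error terms are controlled by $\sup_s|\sqrt{Q}W(s)|_{L^\infty}$ and the bound \eqref{ito2} on $|Y_\varepsilon(s)|_2$, hence by $C(\omega)$ uniformly in $\varepsilon$. The key structural point is that $g_\varepsilon(r)\ge 0$ and, more importantly, $g_\varepsilon(r)$ grows faster than $|\Psi_\varepsilon(r)|$ at infinity: since $\Psi(r)=\mathrm{sign}(r)\ln(|r|+1)$, one has $g(r)=(|r|+1)\ln(|r|+1)-|r|$, and an elementary one-dimensional estimate gives $|\Psi_\varepsilon(r)| = \ln(|(1+\varepsilon\Psi)^{-1}r|+1) \le \ln(|r|+1) \le C + \delta\, g(r)$ for any $\delta>0$ (because $\ln(|r|+1)/g(r)\to 0$). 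Thus a pointwise-in-$(\xi,s)$ bound $|\Psi_\varepsilon(Y_\varepsilon+\sqrt{Q}W)| \le C + g_\varepsilon(Y_\varepsilon+\sqrt{Q}W)$ holds, and integrating over $(0,T)\times\mathcal{O}$ and using the energy identity to bound $\int_0^T\!\int_{\mathcal{O}} g_\varepsilon(Y_\varepsilon+\sqrt{Q}W)\,d\xi\,ds$ — which follows once $\int_{\mathcal{O}} g_\varepsilon(Y_\varepsilon(s)+\sqrt{Q}W(s))\,d\xi$ is bounded uniformly in $s$ and $\varepsilon$ — closes the argument. (Alternatively one integrates the energy identity once more in $t$.)

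The main obstacle I anticipate is the rigorous justification of the chain rule for $\frac{d}{ds}\int_{\mathcal{O}} g_\varepsilon(Y_\varepsilon(s)+\sqrt{Q}W(s))\,d\xi$ and the handling of the $\sqrt{Q}W$ contribution in the integration by parts, since $\sqrt{Q}W$ has no time derivative; the way around it is to use that $\Psi_\varepsilon$ is Lipschitz (so $g_\varepsilon\in C^1$ with Lipschitz-on-bounded-sets derivative) together with $Y_\varepsilon\in C([0,T];L^2)\cap W^{1,2}(0,T;H^{-1})$, $\overline{\Psi}_\varepsilon(Y_\varepsilon+\sqrt{Q}W)\in L^2(0,T;H_0^1)$, and $\sqrt{Q}W\in C([0,T]\times\overline{\mathcal{O}})$, writing the $\sqrt{Q}W$ term as $\int_0^t\int_{\mathcal{O}}\overline{\Psi}_\varepsilon(Y_\varepsilon+\sqrt{Q}W)\,\frac{\partial}{\partial s}(\sqrt{Q}W)\,d\xi\,ds$ understood in the duality $L^2(0,T;H_0^1)\times L^2(0,T;H^{-1})$ via the series $\sqrt{Q}W=\sum\gamma_k\beta_k e_k$ and estimating it with $(H_1)$. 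A secondary point is making the pointwise inequality $\ln(|r|+1)\le C_\delta+\delta g(r)$ explicit with constants independent of $\varepsilon$, which is routine but should be stated carefully because the whole lemma hinges on this superlinearity of $g$ relative to $\Psi$.
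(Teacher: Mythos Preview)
Your plan has a genuine gap at exactly the point you flag as ``the main obstacle'': the chain rule for
\[
\frac{d}{ds}\int_{\mathcal{O}} g_\varepsilon\big(Y_\varepsilon(s)+\sqrt{Q}W(s)\big)\,d\xi
\]
cannot be made rigorous pathwise, and your proposed fix does not work. For $\omega$ fixed, $s\mapsto\sqrt{Q}W(s)$ is a Brownian path, hence almost surely nowhere differentiable in time; hypothesis $(H_1)$ gives only spatial summability of the series and says nothing about time regularity, so writing $\partial_s(\sqrt{Q}W)\in L^2(0,T;H^{-1})$ ``via the series and $(H_1)$'' is simply false. Without that derivative there is no pathwise energy identity of the shape you write, and hence no uniform-in-$\varepsilon$ bound on $\int_0^T\!\int_{\mathcal{O}} g_\varepsilon(Y_\varepsilon+\sqrt{Q}W)$; your pointwise inequality $|\Psi_\varepsilon|\le C+g_\varepsilon$ (which is correct) then has nothing to feed on.

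The paper circumvents differentiating $\sqrt{Q}W$ altogether. It first derives the pointwise bound
\[
|\Psi_\varepsilon(u)|\le \tfrac{2}{\lambda}\,\Psi_\varepsilon(u)\,(u-\lambda)+\tfrac{9}{2\lambda},\qquad u=Y_\varepsilon+\sqrt{Q}W,
\]
by the subdifferential inequality with $\theta=\tfrac{\lambda}{2}\,\Psi_\varepsilon(u)/|\Psi_\varepsilon(u)|$, and then uses the equation to replace $\Psi_\varepsilon(u)$ by $(-\Delta)^{-1}(-\partial_sY_\varepsilon)-\varepsilon u$. The only dangerous term is $\int_0^T\!\int_{\mathcal{O}}(-\Delta)^{-1}(-\partial_sY_\varepsilon)\,\sqrt{Q}W$, and here the key idea you are missing appears: one exploits only the \emph{continuity} of $\sqrt{Q}W$ (hypothesis $(H_2)$) by choosing a partition $0=t_0<\cdots<t_N=T$ with $|\sqrt{Q}W(s)-\sqrt{Q}W(t_i)|_{L^\infty(\mathcal{O})}<\alpha$ on $[t_i,t_{i+1}]$, freezing $\sqrt{Q}W$ at $\sqrt{Q}W(t_i)$ on each subinterval, and absorbing the resulting error $\tfrac{2\alpha}{\lambda}\int_0^T|\Psi_\varepsilon(u)|_{L^1}$ into the left side for $\alpha$ small. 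This freezing trick is what replaces the illegitimate time derivative of $\sqrt{Q}W$ and is the heart of the proof.
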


\begin{proof}[Proof of Lemma \protect\ref{lemma}]

Recall that $g:\mathbb{R} \rightarrow \mathbb{R},$ is defined by%
\begin{equation*}
g\left( x\right) =\left( |x|+1\right) \ln \left( |x|+1\right) -|x|
\end{equation*}%
and $g_{\varepsilon }$ is the Moreau-Yosida approximation of $g$. Since $%
\partial g=\Psi $ we have by [\cite{analys}, Theorem 2.2] that $\nabla
g_{\varepsilon }=\Psi _{\varepsilon }.$

From the definition of the subdifferential we have, for all $0<\lambda <1$
fixed and for all $\theta ,$ such that $\left\vert \theta \right\vert <%
\dfrac{\lambda }{2},$ the following inequality%
\begin{eqnarray*}
&&\Psi _{\varepsilon }\left( Y_{\varepsilon }+\sqrt{Q}W\right) \left(
Y_{\varepsilon }+\sqrt{Q}W-\theta -\lambda \right) \\
&\geq &g_{\varepsilon }\left( Y_{\varepsilon }+\sqrt{Q}W\right)
-g_{\varepsilon }\left( \theta +\lambda \right) \\
&\geq &g\left( J_{\varepsilon }\left( Y_{\varepsilon }+\sqrt{Q}W\right)
\right) -g\left( \theta +\lambda \right) ,
\end{eqnarray*}%
$a.e.$ on $\left[ 0,T\right] \times \mathcal{O}$.

Taking into account that
\begin{equation*}
0\leq g\left( x\right) \leq x^{2},\text{ for all }x\in \mathbb{R},\
\end{equation*}%
we obtain that%
\begin{eqnarray*}
\Psi _{\varepsilon }\left( Y_{\varepsilon }+\sqrt{Q}W\right) \left(
Y_{\varepsilon }+\sqrt{Q}W-\theta -\lambda \right) &\geq &-\left( \theta
+\lambda \right) ^{2} \\
&>&-\frac{9\lambda ^{2}}{4}>-\frac{9}{4},
\end{eqnarray*}%
$a.e.$ on $\left[ 0,T\right] \times \mathcal{O}.$\newline
By taking
\begin{equation*}
\theta =\frac{\lambda }{2}\frac{\Psi _{\varepsilon }\left( Y_{\varepsilon }+%
\sqrt{Q}W\right) }{\left\vert \Psi _{\varepsilon }\left( Y_{\varepsilon }+%
\sqrt{Q}W\right) \right\vert }
\end{equation*}%
it follows that%
\begin{equation*}
\left\vert \Psi _{\varepsilon }\left( Y_{\varepsilon }+\sqrt{Q}W\right)
\right\vert \leq \frac{2}{\lambda }\Psi _{\varepsilon }\left( Y_{\varepsilon
}+\sqrt{Q}W\right) \left( Y_{\varepsilon }+\sqrt{Q}W-\lambda \right) +\frac{9%
}{2\lambda }
\end{equation*}%
$a.e.$ on $\left[ 0,T\right] \times \mathcal{O}.$

Consequently we obtain that%
\begin{eqnarray}
&&\int_{0}^{T}\int_{\mathcal{O}}\left\vert \Psi _{\varepsilon }\left(
Y_{\varepsilon }(s)+\sqrt{Q}W\right) (s)\right\vert d\xi ds\quad \quad \quad
\quad \quad \quad \quad \quad \quad \quad \quad \quad \quad \quad
\label{estimm} \\
&\leq &\frac{2}{\lambda }\int_{0}^{T}\int_{\mathcal{O}}\Psi _{\varepsilon
}\left( Y_{\varepsilon }+\sqrt{Q}W\right) \left( Y_{\varepsilon }+\sqrt{Q}%
W-\lambda \right) d\xi ds+\frac{9}{2\lambda }\left\vert \mathcal{O}%
\right\vert T  \notag \\
&\leq &\frac{2}{\lambda }\int_{0}^{T}\int_{\mathcal{O}}\left[ \left( -\Delta
\right) ^{-1}\left( -\frac{\partial }{\partial s}Y_{\varepsilon }\right)
-\varepsilon \left( Y_{\varepsilon }+\sqrt{Q}W\right) \right]  \notag \\
&&\quad \quad \quad \quad \quad \quad \quad \quad \times \left(
Y_{\varepsilon }+\sqrt{Q}W-\lambda \right) d\xi ds+\frac{9}{2\lambda }%
\left\vert \mathcal{O}\right\vert T  \notag \\
&\leq &\frac{2}{\lambda }\int_{0}^{T}\int_{\mathcal{O}}\left[ \left( -\Delta
\right) ^{-1}\left( -\frac{\partial }{\partial s}Y_{\varepsilon }\right) %
\right] \left( Y_{\varepsilon }+\sqrt{Q}W-\lambda \right) d\xi ds  \notag \\
&&\quad \quad \quad \quad \quad \quad -\frac{2\varepsilon }{\lambda }%
\int_{0}^{T}\int_{\mathcal{O}}\left( Y_{\varepsilon }+\sqrt{Q}W\right)
^{2}d\xi ds  \notag \\
&&\quad \quad \quad \quad \quad \quad +2\varepsilon \int_{0}^{T}\int_{%
\mathcal{O}}\left( Y_{\varepsilon }+\sqrt{Q}W\right) d\xi ds+\frac{9}{%
2\lambda }\left\vert \mathcal{O}\right\vert T  \notag \\
&\leq &\frac{2}{\lambda }\int_{0}^{T}\int_{\mathcal{O}}\left[ \left( -\Delta
\right) ^{-1}\left( -\frac{\partial }{\partial s}Y_{\varepsilon }\right) %
\right] \left( Y_{\varepsilon }+\sqrt{Q}W-\lambda \right) d\xi ds+C.  \notag
\end{eqnarray}%
Now it is sufficient to show boundedness for%
\begin{eqnarray}
&&\frac{2}{\lambda }\int_{0}^{T}\int_{\mathcal{O}}\left( -\Delta \right)
^{-1}\left( -\frac{\partial }{\partial s}Y_{\varepsilon }\right) \left(
Y_{\varepsilon }+\sqrt{Q}W-\lambda \right) d\xi ds  \label{estim} \\
&=&\frac{2}{\lambda }\int_{0}^{T}\int_{\mathcal{O}}\left( -\Delta \right)
^{-1}\left( -\frac{\partial }{\partial s}Y_{\varepsilon }\right)
Y_{\varepsilon }d\xi ds  \notag \\
&&+\frac{2}{\lambda }\int_{0}^{T}\int_{\mathcal{O}}\left( -\Delta \right)
^{-1}\left( -\frac{\partial }{\partial s}Y_{\varepsilon }\right) \sqrt{Q}%
Wd\xi ds  \notag \\
&&-2\int_{0}^{T}\int_{\mathcal{O}}\left( -\Delta \right) ^{-1}\left( -\frac{%
\partial }{\partial s}Y_{\varepsilon }\right) d\xi ds\overset{denote}{=}%
I_{1}+I_{2}+I_{3.}  \notag
\end{eqnarray}

Firstly, we have that%
\begin{eqnarray*}
I_{1} &=&\frac{2}{\lambda }\int_{0}^{T}\int_{\mathcal{O}}\left( -\Delta
\right) ^{-1}\left( -\frac{\partial }{\partial s}Y_{\varepsilon }\right)
Y_{\varepsilon }d\xi ds=\int_{0}^{T}\left\langle -\frac{\partial }{\partial s%
}Y_{\varepsilon },Y_{\varepsilon }\right\rangle _{-1}ds \\
&=&-\frac{1}{2}\left( \left\vert Y_{\varepsilon }\left( T\right) \right\vert
_{_{-1}}^{2}-\left\vert Y_{\varepsilon }\left( 0\right) \right\vert
_{_{-1}}^{2}\right) \leq C.
\end{eqnarray*}%
For the second term%
\begin{equation*}
I_{2}=\frac{2}{\lambda }\int_{0}^{T}\int_{\mathcal{O}}\left( -\Delta \right)
^{-1}\left( -\frac{\partial }{\partial s}Y_{\varepsilon }\right) \sqrt{Q}%
Wd\xi ds,
\end{equation*}%
we may choose, for $\alpha >0,$ small enough, a decomposition $%
0=t_{0}<t_{1}<...<t_{i}<t_{i+1}<...<t_{N}=T$ such that, for all $t,s\in %
\left[ t_{i},t_{i+1}\right] ,$ we have%
\begin{equation*}
\left\vert \sqrt{Q}W\left( t\right) -\sqrt{Q}W\left( s\right) \right\vert
_{L^{\infty }\left( \mathcal{O}\right) }<\alpha .
\end{equation*}%
Consequently, we may write%
\begin{eqnarray*}
I_{2} &=&\frac{2}{\lambda }\sum_{i=0}^{N-1}\int_{t_{i}}^{t_{i+1}}\int_{%
\mathcal{O}}\left( -\Delta \right) ^{-1}\left( -\frac{\partial }{\partial s}%
Y_{\varepsilon }\right) \sqrt{Q}Wd\xi ds \\
&=&\frac{2}{\lambda }\sum_{i=0}^{N-1}\int_{t_{i}}^{t_{i+1}}\int_{\mathcal{O}%
}\left( -\Delta \right) ^{-1}\left( -\frac{\partial }{\partial s}%
Y_{\varepsilon }\right) \left( \sqrt{Q}W\left( s\right) -\sqrt{Q}W\left(
t_{i}\right) \right) d\xi ds \\
&&\quad \quad +\frac{2}{\lambda }\sum_{i=0}^{N-1}\int_{t_{i}}^{t_{i+1}}\int_{%
\mathcal{O}}\left( -\Delta \right) ^{-1}\left( -\frac{\partial }{\partial s}%
Y_{\varepsilon }\right) \sqrt{Q}W\left( t_{i}\right) d\xi ds \\
&\leq &\frac{2\alpha }{\lambda }\int_{0}^{T}\left\vert \overline{\Psi }%
_{\varepsilon }\left( Y_{\varepsilon }\left( s\right) +\sqrt{Q}W\left(
s\right) \right) \right\vert _{L^{1}\left( \mathcal{O}\right) }ds \\
&&\quad \quad -\frac{2}{\lambda }\sum_{i=0}^{N-1}\int_{t_{i}}^{t_{i+1}}\left%
\langle \frac{\partial }{\partial s}Y_{\varepsilon }\left( s\right) ,\sqrt{Q}%
W\left( t_{i}\right) \right\rangle _{-1}ds \\
&\leq &\frac{2\alpha }{\lambda }\int_{0}^{T}\left\vert \Psi _{\varepsilon
}\left( Y_{\varepsilon }\left( s\right) +\sqrt{Q}W\left( s\right) \right)
\right\vert _{L^{1}\left( \mathcal{O}\right) }ds+C.
\end{eqnarray*}%
Finally we have%
\begin{eqnarray*}
I_{3} &=&2\int_{0}^{T}\int_{\mathcal{O}}\left( -\Delta \right) ^{-1}\left(
\frac{\partial }{\partial s}Y_{\varepsilon }\right) d\xi ds \\
&=&2\int_{\mathcal{O}}\left( \left( -\Delta \right) ^{-1}Y_{\varepsilon
}\left( T\right) -\left( -\Delta \right) ^{-1}Y_{\varepsilon }\left(
0\right) \right) d\xi \\
&\leq &2\left( \left\vert \left( -\Delta \right) ^{-1}Y_{\varepsilon }\left(
T\right) \right\vert _{L^{1}\left( \mathcal{O}\right) }+\left\vert \left(
-\Delta \right) ^{-1}Y_{\varepsilon }\left( 0\right) \right\vert
_{L^{1}\left( \mathcal{O}\right) }\right) \leq C,
\end{eqnarray*}%
since $H_{0}^{1}\left( \mathcal{O}\right) \subset L^{1}\left( \mathcal{O}%
\right) $ and $\left\vert \left( -\Delta \right) ^{-1}x\right\vert
_{H_{0}^{1}\left( \mathcal{O}\right) }=\left\vert x\right\vert _{-1},$ for
all $x\in H^{-1}\left( \mathcal{O}\right) .$

Going back to (\ref{estim}) we get that%
\begin{eqnarray*}
&&\frac{2}{\lambda }\int_{0}^{T}\int_{\mathcal{O}}\left( -\Delta \right)
^{-1}\left( -\frac{\partial }{\partial s}Y_{\varepsilon }\right) \left(
Y_{\varepsilon }+\sqrt{Q}W-\lambda \right) d\xi ds \\
&\leq &\frac{2\alpha }{\lambda }\int_{0}^{T}\left\vert \Psi _{\varepsilon
}\left( Y_{\varepsilon }\left( s\right) +\sqrt{Q}W\left( s\right) \right)
\right\vert _{L^{1}\left( \mathcal{O}\right) }ds+C
\end{eqnarray*}%
and then, for $\alpha $ small enough and $\lambda $ fixed, we get from (\ref%
{estimm}) that%
\begin{equation*}
\int_{0}^{T}\int_{\mathcal{O}}\left\vert \Psi _{\varepsilon }\left(
Y_{\varepsilon }\left( s\right) +\sqrt{Q}W\left( s\right) \right)
\right\vert d\xi ds\leq C.
\end{equation*}%
The proof of the lemma is now complete.
\end{proof}

\begin{proof}[Proof of the main result]

\textbf{Existence}

We have to prove existence of the limit for $\left\{ Y_{\varepsilon
}\right\} _{\varepsilon }$ as $\varepsilon \rightarrow 0$ and consequently
we'll get existence of the solution for equation (\ref{equ}) in the sense of
Definition \ref{def}.\newline
From Lemma \ref{lemma} we get, for all $\omega \in \Omega $ fixed, that%
\begin{equation}
\left\vert \frac{\partial }{\partial t}\left( -\Delta \right)
^{-1}Y_{\varepsilon }\right\vert _{L^{1}\left( 0,T;L^{1}\left( \mathcal{O}%
\right) \right) }\leq C.
\end{equation}%
Since for $\mathcal{O}\subset \mathbb{R}$ we have $L^{1}\left( \mathcal{O}%
\right) \subset H^{-1}\left( \mathcal{O}\right) $, this leads to
\begin{eqnarray}
V_{0}^{T}\left( \left( -\Delta \right) ^{-1}Y_{\varepsilon }\right) &\leq
&\left\vert \frac{\partial }{\partial t}\left( -\Delta \right)
^{-1}Y_{\varepsilon }\right\vert _{L^{1}\left( 0,T;H^{-1}\left( \mathcal{O}%
\right) \right) }  \label{helly1} \\
&\leq &\left\vert \frac{\partial }{\partial t}\left( -\Delta \right)
^{-1}Y_{\varepsilon }\right\vert _{L^{1}\left( 0,T;L^{1}\left( \mathcal{O}%
\right) \right) }\leq C,  \notag
\end{eqnarray}%
for $C$ independent of $\varepsilon .$ We denoted by
\begin{equation*}
V_{0}^{T}\left( f\right) =\sup \sum\limits_{i=1}^{n}\left\vert f\left(
t_{i}\right) -f\left( t_{i-1}\right) \right\vert _{-1}
\end{equation*}%
where the supremum is taken over all partitions $D=\left\{
0=t_{0}<t_{1}<...<t_{n}=T\right\} $ of $\left[ 0,T\right] .$

On the other hand, by classical deterministical arguments we have that, for
each $\omega \in \Omega $ fixed
\begin{equation*}
\underset{t\in \left[ 0,T\right] }{\sup }\left\vert Y_{\varepsilon }\left(
t\right) \right\vert _{-1}^{2}\leq C\left( \omega \right)
\end{equation*}%
which leads to%
\begin{equation}
\underset{t\in \left[ 0,T\right] }{\sup }\left\vert \left( -\Delta \right)
^{-1}Y_{\varepsilon }\left( t\right) \right\vert _{H_{0}^{1}\left( \mathcal{O%
}\right) }^{2}\leq C\left( \omega \right) .  \label{ito}
\end{equation}%
Then, since $\left\{ \left( -\Delta \right) ^{-1}Y_{\varepsilon }\left(
t\right) \right\} _{\varepsilon }$ is bounded in $H_{0}^{1}\left( \mathcal{O}%
\right) $ and $H_{0}^{1}\left( \mathcal{O}\right) \subset H^{-1}\left(
\mathcal{O}\right) $ compactly, we have that
\begin{equation}
\left\{ \left( -\Delta \right) ^{-1}Y_{\varepsilon }\left( t\right) \right\}
_{\varepsilon }\text{ is compact in }H^{-1}\left( \mathcal{O}\right) ,\text{
for all }t\in \left[ 0,T\right] .  \label{helly2}
\end{equation}%
From (\ref{helly1}) and\ (\ref{helly2}) it follows, via Helly-Foia\c{s}
theorem (see Theorem 3.5 and Remark 3.2 from \cite{BPr} or page 238 from
\cite{MN}), that, on a subsequence, we have
\begin{equation*}
\left( -\Delta \right) ^{-1}Y_{\varepsilon }\left( t\right) \rightarrow
G\left( t\right) \text{ strongly in }H^{-1}\left( \mathcal{O}\right) ,\text{
for all }t\in \left[ 0,T\right] .
\end{equation*}%
Because $H_{0}^{1}\left( \mathcal{O}\right) \subset L^{2}\left( \mathcal{O}%
\right) \subset H^{-1}\left( \mathcal{O}\right) $ we have that%
\begin{eqnarray*}
\left\vert \left( -\Delta \right) ^{-1}Y_{\varepsilon }\left( t\right)
-G\left( t\right) \right\vert _{L^{2}\left( \mathcal{O}\right) } &\leq
&\varepsilon \left\vert \left( -\Delta \right) ^{-1}Y_{\varepsilon }\left(
t\right) -G\left( t\right) \right\vert _{H_{0}^{1}\left( \mathcal{O}\right) }
\\
&&+C\left( \varepsilon \right) \left\vert \left( -\Delta \right)
^{-1}Y_{\varepsilon }\left( t\right) -G\left( t\right) \right\vert _{-1}
\end{eqnarray*}%
for all $t\in \left[ 0,T\right] $ (see \cite{lions}, p. 58)\ and therefore
\begin{equation*}
\left( -\Delta \right) ^{-1}Y_{\varepsilon }\left( t\right) \rightarrow
G\left( t\right) \text{ strongly in }L^{2}\left( \mathcal{O}\right) ,\text{
for all }t\in \left[ 0,T\right] .
\end{equation*}%
Using (\ref{ito2}) we obtain that%
\begin{equation}
\left( -\Delta \right) ^{-1}Y_{\varepsilon }\left( t\right) \rightarrow
\left( -\Delta \right) ^{-1}Y\left( t\right) \text{ strongly in }L^{2}\left(
\mathcal{O}\right) ,\text{ for all }t\in \left[ 0,T\right] .  \label{tare2}
\end{equation}%
On the other hand we have
\begin{eqnarray*}
&&\left\vert Y_{\varepsilon }\left( t\right) -Y_{\lambda }\left( t\right)
\right\vert _{-1}^{2}\medskip \\
&=&\left\langle \left( -\Delta \right) ^{-1}Y_{\varepsilon }\left( t\right)
-\left( -\Delta \right) ^{-1}Y_{\lambda }\left( t\right) ,Y_{\varepsilon
}\left( t\right) -Y_{\lambda }\left( t\right) \right\rangle _{L^{2}\left(
\mathcal{O}\right) }\medskip \\
&\leq &\left\vert \left( -\Delta \right) ^{-1}Y_{\varepsilon }\left(
t\right) -\left( -\Delta \right) ^{-1}Y_{\lambda }\left( t\right)
\right\vert _{L^{2}\left( \mathcal{O}\right) }\left\vert Y_{\varepsilon
}\left( t\right) -Y_{\lambda }\left( t\right) \right\vert _{L^{2}\left(
\mathcal{O}\right) }.\medskip
\end{eqnarray*}%
Using (\ref{ito2}) and (\ref{tare2}) we get that%
\begin{equation}
Y_{\varepsilon }\left( t\right) \rightarrow Y\left( t\right) \text{ strongly
in }H^{-1}\left( \mathcal{O}\right) ,\text{ for all }t\in \left[ 0,T\right]
\label{convergenta}
\end{equation}%
and, since $X_{\varepsilon }=Y_{\varepsilon }+\sqrt{Q}W,$ with $\sqrt{Q}W\in
C\left( \left[ 0,T\right] \times \overline{\mathcal{O}}\right) $, we have
\begin{equation}
X_{\varepsilon }\left( t\right) \rightarrow X\left( t\right) \text{ strongly
in }H^{-1}\left( \mathcal{O}\right) ,\text{ for all }t\in \left[ 0,T\right] .
\label{c1}
\end{equation}

On the other hand, from (\ref{ito1}) we have, for every $\omega \in \Omega $
fixed, that
\begin{eqnarray*}
\int_{0}^{t}\int_{\mathcal{O}}g\left( \left( 1+\varepsilon \Psi \right)
^{-1}X_{\varepsilon }\left( t\right) \right) d\xi &\leq &\int_{0}^{t}\int_{%
\mathcal{O}}\left\vert \left( 1+\varepsilon \Psi \right) ^{-1}X_{\varepsilon
}\left( t\right) \right\vert ^{2}d\xi \\
&\leq &\int_{0}^{t}\int_{\mathcal{O}}\left\vert X_{\varepsilon }\left(
t\right) \right\vert ^{2}d\xi \leq C\left( \omega \right) ,\text{ for all}%
~t\in \left[ 0,T\right] ,
\end{eqnarray*}%
where $\left( 1+\varepsilon \Psi \right) ^{-1}$ is the resolvent of $\Psi .$

Since%
\begin{equation*}
\underset{|x|\rightarrow \infty }{\lim }\frac{g\left( x\right) }{\left\vert
x\right\vert }=\infty
\end{equation*}%
we have that $\left\{ \left( 1+\varepsilon \Psi \right) ^{-1}\left(
X_{\varepsilon }\left( t\right) \right) \right\} _{\varepsilon }$ is bounded
and equi-integrable in $L^{1}\left( \mathcal{O\times }\left( 0,T\right)
\right) .$ Then, by the Dunford-Pettis theorem, we get that the sequence is
weakly compact in $L^{1}\left( \mathcal{O\times }\left( 0,T\right) \right) .$
Hence, along a subsequence, again denoting by $\varepsilon $, we obtain that
\begin{equation}
\left( 1+\varepsilon \Psi \right) ^{-1}\left( X_{\varepsilon }\left(
t\right) \right) \rightharpoonup X\left( t\right) ,\text{ \ weakly in }%
L^{1}\left( \mathcal{O\times }\left( 0,T\right) \right) ,  \label{c2}
\end{equation}%
as $\varepsilon \rightarrow 0.$

We know that $X_{\varepsilon }=Y_{\varepsilon }+\sqrt{Q}W$ is also a
solution to equation (\ref{approx1}) in the sense of our definition, $i.e.,$%
\begin{eqnarray}
&&\frac{1}{2}\left\vert X_{\varepsilon }\left( t\right) -\sqrt{Q}W\left(
t\right) -Z\left( t\right) \right\vert _{-1}^{2}+\int_{0}^{t}\int_{\mathcal{O%
}}g_{\varepsilon }\left( X_{\varepsilon }\left( s\right) \right) d\xi
ds\medskip  \label{def-aprox} \\
&&\quad \quad \quad \quad +\int_{0}^{t}\int_{\mathcal{O}}\left( -\Delta
\right) ^{-1}\frac{\partial }{\partial s}Z\left( s\right) \left(
X_{\varepsilon }\left( s\right) -\sqrt{Q}W\left( s\right) -Z\left( s\right)
\right) d\xi ds\medskip  \notag \\
&\leq &\frac{1}{2}\left\vert x-Z\left( 0\right) \right\vert _{H^{-1}\left(
\mathcal{O}\right) }^{2}+\int_{0}^{t}\int_{\mathcal{O}}g_{\varepsilon
}\left( Z\left( s\right) +\sqrt{Q}W\left( s\right) \right) d\xi ds,\quad
\mathbb{P-}a.s.\medskip  \notag
\end{eqnarray}%
for all$~t\in \left[ 0,T\right] .$

We intend to take the $liminf$ for $\varepsilon \rightarrow 0$ in (\ref%
{def-aprox}).

Convergence of the first term is a direct consequence of (\ref{c1}) and for
the last term we only need to use classical properties of the Moreau-Yosida
approximation i.e.,\ $g\left( \left( 1+\varepsilon \Psi \right) ^{-1}\left(
x\right) \right) \leq g_{\varepsilon }\left( x\right) \leq g\left( x\right) $
for all $x\in \mathbb{R}$.

We shall discuss now the second and the third term of the left hand side.%
\newline

Since $\varphi :L^{1}\left( \mathcal{O\times }\left( 0,T\right) \right)
\rightarrow \mathbb{R}$ defined by $\varphi \left( x\right)
=\int_{0}^{t}\int_{\mathcal{O}}g\left( x\left( \xi ,s\right) \right) d\xi ds$
is weakly $l.s.c.$ on $L^{1}\left( \mathcal{O\times }\left( 0,T\right)
\right) $ (see \cite{analys} Proposition 2.9 and Proposition 2.12 from
Chapter 2) we get from (\ref{c2}) that
\begin{equation*}
\underset{\varepsilon \rightarrow 0}{\lim \inf }\int_{0}^{t}\int_{\mathcal{O}%
}g_{\varepsilon }\left( X_{\varepsilon }\left( s\right) \right) d\xi ds\geq
\int_{0}^{t}\int_{\mathcal{O}}g\left( X\left( s\right) \right) d\xi ds,\text{
\ for all}~t\in \left[ 0,T\right] ,
\end{equation*}%
and then we can pass to the $liminf$ for $\varepsilon \rightarrow 0$ in the
second term.\newline
The third term of the left hand side can be written as%
\begin{equation*}
\int_{0}^{t}\left\langle \frac{\partial }{\partial s}Z\left( s\right)
,Y_{\varepsilon }\left( s\right) -Z\left( s\right) \right\rangle _{-1}ds.
\end{equation*}%
From (\ref{convergenta}) we have that%
\begin{equation*}
\left\langle \frac{\partial }{\partial s}Z,Y_{\varepsilon }-Z\right\rangle
_{-1}\rightarrow \left\langle \frac{\partial }{\partial s}Z,Y-Z\right\rangle
_{-1},\text{ }a.e.\text{ on }\left[ 0,T\right] .
\end{equation*}%
On the other hand we can easily see that%
\begin{eqnarray*}
\left\langle \frac{\partial }{\partial s}Z\left( s\right) ,Y_{\varepsilon
}\left( s\right) -Z\left( s\right) \right\rangle _{-1} &\leq &\left\vert
\frac{\partial }{\partial s}Z\left( s\right) \right\vert _{-1}\left\vert
Y_{\varepsilon }\left( s\right) -Z\left( s\right) \right\vert _{-1} \\
&\leq &\left\vert \frac{\partial }{\partial s}Z\left( s\right) \right\vert
_{-1}\underset{s\in \left[ 0,T\right] }{ess\sup }\left\vert Y_{\varepsilon
}\left( s\right) -Z\left( s\right) \right\vert _{-1} \\
&\leq &C\left\vert \frac{\partial }{\partial s}Z\left( s\right) \right\vert
_{-1}\in L^{2}\left( 0,T\right) ,\text{ }a.e.\text{ on }\left[ 0,T\right] .
\end{eqnarray*}%
Now, by using Lebesgue's dominated convergence theorem we obtain that%
\begin{equation*}
\underset{\varepsilon \rightarrow 0}{\lim }\int_{0}^{t}\left\langle \frac{%
\partial }{\partial s}Z\left( s\right) ,Y_{\varepsilon }\left( s\right)
-Z\left( s\right) \right\rangle _{-1}ds=\int_{0}^{t}\left\langle \frac{%
\partial }{\partial s}Z\left( s\right) ,Y\left( s\right) -Z\left( s\right)
\right\rangle _{-1}ds.
\end{equation*}%
At this point we can take the $\lim \inf $ \ for $\varepsilon \rightarrow 0$
in (\ref{def-aprox}) and get that
\begin{eqnarray*}
&&\frac{1}{2}\left\vert X\left( t\right) -\sqrt{Q}W\left( t\right) -Z\left(
t\right) \right\vert _{-1}^{2}+\int_{0}^{t}\int_{\mathcal{O}}g\left( X\left(
s\right) \right) d\xi ds\medskip \\
&&\quad \quad \quad \quad +\int_{0}^{t}\int_{\mathcal{O}}\left( -\Delta
\right) ^{-1}\frac{\partial }{\partial s}Z\left( s\right) \left( X\left(
s\right) -\sqrt{Q}W\left( s\right) -Z\left( s\right) \right) d\xi ds\medskip
\\
&\leq &\frac{1}{2}\left\vert x-Z\left( 0\right) \right\vert
_{-1}^{2}+\int_{0}^{t}\int_{\mathcal{O}}g\left( Z\left( s\right) +\sqrt{Q}%
W\left( s\right) \right) d\xi ds,\quad \mathbb{P-}a.s.
\end{eqnarray*}%
for all $t\in \left[ 0,T\right] .$

By applying the It\^{o} formula to equation (\ref{approx1}) with the
function $x\mapsto \left\vert x\right\vert _{-1}^{2}$ and from (\ref{c1}) we
get that
\begin{equation*}
X_{\varepsilon }\rightarrow X,\text{ weakly in }L_{W}^{2}\left( \left[ 0,T%
\right] ;L^{2}\left( \Omega ;H^{-1}\left( \mathcal{O}\right) \right) \right)
,\text{ as }\varepsilon \rightarrow 0.
\end{equation*}

Then, arguing as in \cite{concise}, we may replace $X$ by a $H^{-1}\left(
\mathcal{O}\right) -$ continuous version and follows that the solution is
also an $\left( \mathcal{F}_{t}\right) $- adapted stochastic process.

On the other hand we can easily see that $X\in L^{p}\left( \Omega \times
\mathcal{O}\times \left[ 0,T\right] \right) $ arguing as in Lemma 3.1 from
\cite{criticality} and that conclude the proof of the existence.

\textbf{Uniqueness}

Consider $\overline{X}$ an arbitrary solution to equation (\ref{equ}) in the
sense of Definition \ref{def}.

The main idea of the proof is to take $Z=\left( 1-\mu \Delta \right)
^{-1}Y_{\varepsilon }=J_{\mu }Y_{\varepsilon }$ in Definition \ref{def} ,
for $Y_{\varepsilon }$ the solution to equation (\ref{aprox3}) below\ and $%
J_{\mu }$ the resolvent of the Laplacian.

Consider, for each $\omega \in \Omega $ fixed, the following approximating
equation%
\begin{equation}
\left\{
\begin{array}{ll}
dY_{\varepsilon }\left( t\right) -\Delta \overline{\Psi }_{\varepsilon
}\left( Y_{\varepsilon }\left( t\right) +\sqrt{Q}W\left( t\right) \right)
dt=0, & in~\left( 0,T\right) \times \mathcal{O}, \\
\overline{\Psi }_{\varepsilon }\left( Y_{\varepsilon }\left( t\right) +\sqrt{%
Q}W\left( t\right) \right) =0, & on~\left( 0,T\right) \times \partial
\mathcal{O}, \\
Y_{\varepsilon }\left( 0\right) =x, & in~\mathcal{O}.%
\end{array}%
\right.  \label{aprox3}
\end{equation}%
where $\overline{\Psi }_{\varepsilon }\left( x\right) =\Psi _{\varepsilon
}\left( x\right) +\varepsilon x,~~$for all $x\in \mathbb{R} $
and $\Psi _{\varepsilon }$ is the Yosida approximation of $\Psi ,$ for every
$\varepsilon >0.$ By classical existence theory, equation (\ref{aprox3}) has
a unique solution
\begin{equation*}
Y_{\varepsilon }\in C\left( \left[ 0,T\right] ;L^{2}\left( \mathcal{O}%
\right) \right) \cap L^{2}\left( 0,T;H_{0}^{1}\left( \mathcal{O}\right)
\right) ,
\end{equation*}%
with $Y_{\varepsilon }^{\prime }\in L^{2}\left( 0,T;H^{-1}\left( \mathcal{O}%
\right) \right) .$

For $\mu >0$ fixed we consider the resolvent of the Laplacian $J_{\mu
}:L^{2}\left( \mathcal{O}\right) \rightarrow L^{2}\left( \mathcal{O}\right)
, $ $J_{\mu }\left( x\right) =\left( 1-\mu \Delta \right) ^{-1}\left(
x\right) ,$ for all $x\in L^{2}\left( \mathcal{O}\right) .$

Since $J_{\mu }$ is differentiable we may denote by $DJ_{\mu }$ the Gateaux
differential and we see that
\begin{equation}
\frac{d }{d t}J_{\mu }\left( Y_{\varepsilon }\left( t\right) \right)
=DJ_{\mu }\left( Y_{\varepsilon }\left( t\right) \right) \frac{\partial }{%
\partial t}Y_{\varepsilon }\left( t\right) =J_{\mu }\left( \frac{\partial }{%
\partial t}Y_{\varepsilon }\left( t\right) \right)  \label{comut}
\end{equation}%
for all $t\in \left[ 0,T\right] .$

Now we can prove that $Z=J_{\mu }Y_{\varepsilon }$ satisfies $i),~...,~iii)$
from Definition \ref{def}.

We can easily see that $J_{\mu }Y_{\varepsilon }\in C\left( \left[ 0,T\right]
;L^{2}\left( \mathcal{O}\right) \right) $ and then $i)$ is satisfied.

Concerning $ii)$ we know that, for every $\varepsilon >0$ fixed, we have%
\begin{equation*}
\int_{0}^{t}\left\vert J_{\mu }\frac{\partial }{\partial s}Y_{\varepsilon
}\left( s\right) \right\vert _{-1}^{2}ds\leq \int_{0}^{t}\left\vert \frac{%
\partial }{\partial s}Y_{\varepsilon }\left( s\right) \right\vert
_{-1}^{2}ds
\end{equation*}%
and%
\begin{equation*}
\frac{\partial }{\partial s}Y_{\varepsilon }\in L^{2}\left( 0,T;H^{-1}\left(
\mathcal{O}\right) \right) .
\end{equation*}%
Hence, by (\ref{comut}), we get that%
\begin{equation*}
\frac{\partial }{\partial t}Z=\frac{d }{d t}J_{\mu }\left( Y_{\varepsilon
}\right) \in L^{2}\left( 0,T;H^{-1}\left( \mathcal{O}\right) \right) .
\end{equation*}

Property $iii)$ is a direct consequence of $0\leq g\left( x\right) \leq
x^{2},$ for all $x\in \left( -1,\infty \right) .$ Indeed, we have for every $%
\varepsilon $ and $\mu $ fixed that
\begin{equation*}
\int_{0}^{t}\int_{\mathcal{O}}g\left( J_{\mu }Y_{\varepsilon }\left(
s\right) +\sqrt{Q}W\right) d\xi ds\leq \int_{0}^{t}\int_{\mathcal{O}%
}\left\vert J_{\mu }Y_{\varepsilon }\left( s\right) +\sqrt{Q}W\right\vert
^{2}d\xi ds\leq \infty
\end{equation*}%
because $J_{\mu }Y_{\varepsilon }\in C\left( \left[ 0,T\right] ;L^{2}\left(
\mathcal{O}\right) \right) $ and $\sqrt{Q}W\in C\left( \left[ 0,T\right]
\times \overline{\mathcal{O}}\right) .$

Since $J_{\mu }Y_{\varepsilon }$ satisfies i), ii) and iii) we can write
Definition \ref{def} for the solution $\bar{X}$ with $Z=J_{\mu
}Y_{\varepsilon },$ i.e.,
\begin{eqnarray}
&&\ \ \ \frac{1}{2}\left\vert \bar{X}\left( t\right) -\sqrt{Q}W\left(
t\right) -J_{\mu }Y_{\varepsilon }\left( t\right) \right\vert
_{-1}^{2}+\int_{0}^{t}\int_{\mathcal{O}}g\left( \bar{X}\left( s\right)
\right) d\xi ds  \label{def_unic} \\
&&\quad +\int_{0}^{t}\int_{\mathcal{O}}\left( -\Delta \right) ^{-1}\frac{d }{%
d s}J_{\mu }Y_{\varepsilon }\left( s\right) \left( \bar{X}\left( s\right) -%
\sqrt{Q}W\left( s\right) -J_{\mu }Y_{\varepsilon }\left( s\right) \right)
d\xi ds  \notag \\
&\leq &\frac{1}{2}\left\vert x-J_{\mu }Y_{\varepsilon }\left( s\right)
\left( 0\right) \right\vert _{-1}^{2}+\int_{0}^{t}\int_{\mathcal{O}}g\left(
J_{\mu }Y_{\varepsilon }\left( s\right) +\sqrt{Q}W\left( s\right) \right)
d\xi ds,\quad \mathbb{P-}a.s..  \notag
\end{eqnarray}%
Applying $J_{\mu }$ (which is linear) to (\ref{aprox3}) we get that%
\begin{equation*}
\frac{d }{d t}\left[ J_{\mu }Y_{\varepsilon }\left( t\right) \right] +J_{\mu
}\left( -\Delta \right) \left( \overline{\Psi }_{\varepsilon }\left(
Y_{\varepsilon }\left( t\right) +\sqrt{Q}W\left( t\right) \right) \right)
=0.
\end{equation*}%
By Proposition VII 2, $a_{1})$ and $a_{2})$ from \cite{brezis.analiza} we
can rewrite this equation as follows%
\begin{equation}
\frac{d }{d t}\left[ J_{\mu }Y_{\varepsilon }\left( t\right) \right] -\Delta
\left( J_{\mu }\overline{\Psi }_{\varepsilon }\left( Y_{\varepsilon }\left(
t\right) +\sqrt{Q}W\left( t\right) \right) \right) =0.  \label{ecc}
\end{equation}%
Using (\ref{ecc}) in the third term of the left-hand side of (\ref{def_unic}%
) we can rewrite it%
\begin{eqnarray*}
&&\int_{0}^{t}\int_{\mathcal{O}}\left( -\Delta \right) ^{-1}\frac{d }{d s}%
J_{\mu }Y_{\varepsilon }\left( s\right) \left( \bar{X}\left( s\right) -\sqrt{%
Q}W\left( s\right) -J_{\mu }Y_{\varepsilon }\left( s\right) \right) d\xi
ds\medskip \\
&=&\int_{0}^{t}\int_{\mathcal{O}}J_{\mu }\overline{\Psi }_{\varepsilon
}\left( Y_{\varepsilon }+\sqrt{Q}W\right) \left( J_{\mu }Y_{\varepsilon
}\left( s\right) -\bar{X}\left( s\right) +\sqrt{Q}W\left( s\right) \right)
d\xi ds\medskip \\
&=&\int_{0}^{t}\int_{\mathcal{O}}\overline{\Psi }_{\varepsilon }\left(
Y_{\varepsilon }+\sqrt{Q}W\right) \left( Y_{\varepsilon }+\sqrt{Q}W-J_{\mu
}\left( \overline{Y}+\sqrt{Q}W\right) \right) d\xi ds\medskip \\
&&+\int_{0}^{t}\int_{\mathcal{O}}\overline{\Psi }_{\varepsilon }\left(
Y_{\varepsilon }+\sqrt{Q}W\right) \left( J_{\mu }\sqrt{Q}W-\sqrt{Q}W\right)
d\xi ds\medskip \\
&&+\int_{0}^{t}\int_{\mathcal{O}}\overline{\Psi }_{\varepsilon }\left(
Y_{\varepsilon }+\sqrt{Q}W\right) \left( \left( 1-\mu \Delta \right)
^{-2}Y_{\varepsilon }-Y_{\varepsilon }\right) d\xi ds\medskip \\
&=&T_{1}+T_{2}+T_{3},
\end{eqnarray*}%
where $\overline{Y}=\bar{X}-\sqrt{Q}W.$

Since $\overline{\Psi }_{\varepsilon }\left( x\right) =\Psi _{\varepsilon
}\left( x\right) +\varepsilon x,$ where $\Psi _{\varepsilon }$ is the Yosida
approximation of $\Psi ,$ and $\overline{g}_{\varepsilon }\left( x\right)
=g_{\varepsilon }\left( x\right) +\varepsilon \frac{x^{2}}{2},$ where $%
g_{\varepsilon }$ is the Moreau-Yosida approximation of $g,$ we have that $%
\overline{g}_{\varepsilon }^{\prime }=\overline{\Psi }_{\varepsilon }$ and
then, by the definition of the subdifferential, we get that
\begin{equation*}
\overline{\Psi }_{\varepsilon }\left( x\right) \left( x-y\right) \geq
\overline{g}_{\varepsilon }\left( x\right) -\overline{g}_{\varepsilon
}\left( y\right) .
\end{equation*}

This leads to%
\begin{eqnarray*}
T_{1} &=&\int_{0}^{t}\int_{\mathcal{O}}\overline{\Psi }_{\varepsilon }\left(
Y_{\varepsilon }+\sqrt{Q}W\right) \left( Y_{\varepsilon }+\sqrt{Q}W-J_{\mu
}\left( \overline{Y}+\sqrt{Q}W\right) \right) d\xi ds \\
&\geq &\int_{0}^{t}\int_{\mathcal{O}}\left( \overline{g}_{\varepsilon
}\left( Y_{\varepsilon }+\sqrt{Q}W\right) -\overline{g}_{\varepsilon }\left(
J_{\mu }\left( \overline{Y}+\sqrt{Q}W\right) \right) \right) d\xi ds.
\end{eqnarray*}%
We also have
\begin{eqnarray*}
-T_{2} &=&\int_{0}^{t}\int_{\mathcal{O}}\overline{\Psi }_{\varepsilon
}\left( Y_{\varepsilon }+\sqrt{Q}W\right) \left( \sqrt{Q}W-J_{\mu }\sqrt{Q}%
W\right) d\xi ds \\
&\leq &\int_{0}^{t}\left\vert \overline{\Psi }_{\varepsilon }\left(
Y_{\varepsilon }+\sqrt{Q}W\right) \right\vert _{-1}\left\vert \sqrt{Q}%
W-J_{\mu }\sqrt{Q}W\right\vert _{H_{0}^{1}\left( \mathcal{O}\right) }ds \\
&\leq &\left\vert \overline{\Psi }_{\varepsilon }\left( Y_{\varepsilon }+%
\sqrt{Q}W\right) \right\vert _{L^{1}\left( \left( 0,T\right) \times \mathcal{%
O}\right) }\left\vert \sqrt{Q}W-J_{\mu }\sqrt{Q}W\right\vert _{L^{\infty
}\left( 0,T;H_{0}^{1}\left( \mathcal{O}\right) \right) } \\
&\leq &C\left\vert \sqrt{Q}W-J_{\mu }\sqrt{Q}W\right\vert _{L^{\infty
}\left( 0,T;H_{0}^{1}\left( \mathcal{O}\right) \right) },
\end{eqnarray*}%
using Lemma \ref{lemma} and the fact that $L^{1}\left( \mathcal{O}\right)
\subset H^{-1}\left( \mathcal{O}\right) ,$ for $\mathcal{O}\in \mathbb{R}$.%
\newline
From (\ref{aprox3}) follows that
\begin{eqnarray*}
T_{3} &=&\int_{0}^{t}\int_{\mathcal{O}}\left( -\Delta \right) ^{-1}\frac{%
\partial }{\partial s}Y_{\varepsilon }\left( s\right) \left( Y_{\varepsilon
}\left( s\right) -\left( 1-\mu \Delta \right) ^{-2}Y_{\varepsilon }\left(
s\right) \right) d\xi ds\medskip \\
&=&\int_{0}^{t}\left\langle \frac{\partial }{\partial s}Y_{\varepsilon
}\left( s\right) ,Y_{\varepsilon }\left( s\right) \right\rangle
_{-1}ds-\int_{0}^{t}\left\langle \frac{d }{d s}J_{\mu }Y_{\varepsilon
}\left( s\right) ,J_{\mu }Y_{\varepsilon }\left( s\right) \right\rangle
_{-1}ds\medskip \\
&=&\int_{0}^{t}\frac{\partial }{\partial s}\frac{1}{2}\left\vert
Y_{\varepsilon }\left( s\right) \right\vert _{-1}^{2}ds-\int_{0}^{t}\frac{%
\partial }{\partial s}\frac{1}{2}\left\vert J_{\mu }Y_{\varepsilon }\left(
s\right) \right\vert _{-1}^{2}ds\medskip \\
&=&\frac{1}{2}\left( \left\vert Y_{\varepsilon }\left( t\right) \right\vert
_{-1}^{2}-\left\vert J_{\mu }Y_{\varepsilon }\left( t\right) \right\vert
_{-1}^{2}\right) -\frac{1}{2}\left( \left\vert Y_{\varepsilon }\left(
0\right) \right\vert _{-1}^{2}-\left\vert J_{\mu }Y_{\varepsilon }\left(
0\right) \right\vert _{-1}^{2}\right) \medskip \\
&\geq &-\frac{1}{2}\left( \left\vert Y_{\varepsilon }\left( 0\right)
\right\vert _{-1}^{2}-\left\vert J_{\mu }Y_{\varepsilon }\left( 0\right)
\right\vert _{-1}^{2}\right) =-\frac{1}{2}\left( \left\vert x\right\vert
_{-1}^{2}-\left\vert J_{\mu }x\right\vert _{-1}^{2}\right)
\end{eqnarray*}%
since $\left\vert J_{\mu }Y_{\varepsilon }\left( t\right) \right\vert
_{-1}^{2}$ $\leq \left\vert Y_{\varepsilon }\left( t\right) \right\vert
_{-1}^{2}$ \ for all $t\in \left[ 0,T\right] $ and $x$ is the starting point
of the problem.\newline
Going back to (\ref{def_unic}) we get that
\begin{eqnarray}
&&\quad \frac{1}{2}\left\vert \overline{Y}\left( t\right) -J_{\mu
}Y_{\varepsilon }\left( t\right) \right\vert _{-1}^{2}+\int_{0}^{t}\int_{%
\mathcal{O}}g\left( \overline{X}\right) d\xi ds+\int_{0}^{t}\int_{\mathcal{O}%
}\overline{g}_{\varepsilon }\left( X_{\varepsilon }\right) d\xi ds
\label{dddd} \\
&\leq &\frac{1}{2}\left\vert x-J_{\mu }Y_{\varepsilon }\left( 0\right)
\right\vert _{-1}^{2}+\int_{0}^{t}\int_{\mathcal{O}}g\left( J_{\mu }%
\overline{X}\right) d\xi ds+\frac{\varepsilon }{2}\int_{0}^{t}\int_{\mathcal{%
O}}\left( J_{\mu }\overline{X}\right) ^{2}d\xi ds  \notag \\
&&\quad +\int_{0}^{t}\int_{\mathcal{O}}g\left( J_{\mu }Y_{\varepsilon }+%
\sqrt{Q}W\right) d\xi ds  \notag \\
&&\quad +C\left\vert \sqrt{Q}W-J_{\mu }\sqrt{Q}W\right\vert _{L^{\infty
}\left( 0,T;H_{0}^{1}\left( \mathcal{O}\right) \right) }  \notag \\
&&\quad +\frac{1}{2}\left( \left\vert x\right\vert _{-1}^{2}-\left\vert
J_{\mu }x\right\vert _{-1}^{2}\right) ,\quad \mathbb{P-}a.s..  \notag
\end{eqnarray}%
We firstly pass to the $liminf$ for $\varepsilon \rightarrow 0,$ with $\mu
>0 $ fixed, as follows.\newline
Arguing as we did in the proof of existence we have that%
\begin{equation*}
\underset{\varepsilon \rightarrow 0}{\lim \inf }\int_{0}^{t}\int_{\mathcal{O}%
}\overline{g}_{\varepsilon }\left( X_{\varepsilon }\left( s\right) \right)
d\xi ds\geq \int_{0}^{t}\int_{\mathcal{O}}g\left( X\left( s\right) \right)
d\xi ds,\text{ \ for all }t\in \left[ 0,T\right] .
\end{equation*}%
We shall now pass to the $liminf$ for $\varepsilon \rightarrow 0,$ with $\mu
>0 $ fixed, in
\begin{equation*}
\int_{0}^{t}\int_{\mathcal{O}}g\left( J_{\mu }Y_{\varepsilon }\left(
s\right) +\sqrt{Q}W\left( s\right) \right) d\xi ds.
\end{equation*}%
We know by (\ref{ito2}) that $\left\{ Y_{\varepsilon }\right\} _{\varepsilon
}$ is bounded in $C\left( \left[ 0,T\right] ;L^{2}\left( \mathcal{O}\right)
\right) $ and considering (\ref{convergenta}) we get that
\begin{equation*}
Y_{\varepsilon }\left( t\right) \rightarrow Y\left( t\right) ,\text{ weakly
in }L^{2}\left( \mathcal{O}\right) ,\text{\ \ for all }t\in \left[ 0,T\right]
,
\end{equation*}%
as $\varepsilon \rightarrow 0.$ On the other hand, we know that, for every $%
\mu >0$ fixed, we have that $J_{\mu }:L^{2}\left( \mathcal{O}\right)
\rightarrow L^{2}\left( \mathcal{O}\right) $ is compact and then
\begin{equation*}
J_{\mu }Y_{\varepsilon }\left( t\right) \rightarrow J_{\mu }Y\left( t\right)
,\text{ strongly in }L^{2}\left( \mathcal{O}\right) ,\text{\ \ for all }t\in %
\left[ 0,T\right] ,
\end{equation*}%
as $\varepsilon \rightarrow 0.$\newline
Since $g:\mathbb{R} \rightarrow \mathbb{R}$ is continuous and $\sqrt{Q}W\in
C\left( \left[ 0,T\right] \times \overline{\mathcal{O}}\right) $ we have that%
\begin{equation}
g\left( J_{\mu }Y_{\varepsilon }+\sqrt{Q}W\right) \rightarrow g\left( J_{\mu
}Y+\sqrt{Q}W\right) ,~a.e.\text{ on }\left[ 0,T\right] \times \mathcal{O}.
\label{lebesgue1}
\end{equation}%
We also know that $J_{\mu }:L^{2}\left( \mathcal{O}\right) \rightarrow
C\left( \overline{\mathcal{O}}\right) $ is continuous and then $\left\{
J_{\mu }Y_{\varepsilon }+\sqrt{Q}W\right\} _{\varepsilon }$ is bounded in $%
C\left( \left[ 0,T\right] \times \overline{\mathcal{O}}\right) .$ We obtain
that
\begin{equation}
g\left( J_{\mu }Y_{\varepsilon }+\sqrt{Q}W\right) \leq \left\vert J_{\mu
}Y_{\varepsilon }+\sqrt{Q}W\right\vert ^{2}\leq C,~a.e.\text{ on }\left[ 0,T%
\right] \times \mathcal{O}.  \label{lebesgue2}
\end{equation}%
Consequently, by Lebesgue's dominated convergence theorem, we get from (\ref%
{lebesgue1}) and (\ref{lebesgue2}) that%
\begin{equation*}
\underset{\varepsilon \rightarrow 0}{\lim }\int_{0}^{t}\int_{\mathcal{O}%
}g\left( J_{\mu }Y_{\varepsilon }+\sqrt{Q}W\right) d\xi ds=\int_{0}^{t}\int_{%
\mathcal{O}}g\left( J_{\mu }Y+\sqrt{Q}W\right) d\xi ds.
\end{equation*}%
Going back to (\ref{dddd}) and passing to the $liminf$ for $\varepsilon
\rightarrow 0,$ with $\mu >0$ fixed, we get that
\begin{eqnarray}
&&\quad \frac{1}{2}\left\vert \overline{Y}\left( t\right) -J_{\mu }Y\left(
t\right) \right\vert _{-1}^{2}\medskip +\int_{0}^{t}\int_{\mathcal{O}%
}g\left( X\right) d\xi ds+\int_{0}^{t}\int_{\mathcal{O}}g\left( \overline{X}%
\right) d\xi ds  \label{defff} \\
&\leq &\frac{1}{2}\left\vert x-J_{\mu }x\right\vert
_{-1}^{2}+\int_{0}^{t}\int_{\mathcal{O}}g\left( J_{\mu }\overline{X}\right)
d\xi ds+\int_{0}^{t}\int_{\mathcal{O}}g\left( J_{\mu }Y+\sqrt{Q}W\right)
d\xi ds\medskip  \notag \\
&&\quad +C\left\vert \sqrt{Q}W-J_{\mu }\sqrt{Q}W\right\vert _{L^{\infty
}\left( 0,T;H_{0}^{1}\left( \mathcal{O}\right) \right) }  \notag \\
&&\quad +\frac{1}{2}\left( \left\vert x\right\vert _{-1}^{2}-\left\vert
J_{\mu }x\right\vert _{-1}^{2}\right) ,\quad \mathbb{P-}a.s..  \notag
\end{eqnarray}%
We shall now pass to the $liminf$ for $\mu \rightarrow 0$.

Firstly we discuss the following term
\begin{equation*}
\int_{0}^{t}\int_{\mathcal{O}}\left[ g\left( J_{\mu }\overline{X}\left(
s\right) \right) -g\left( \overline{X}\left( s\right) \right) \right] d\xi
ds.
\end{equation*}%
Since
\begin{equation*}
J_{\mu }\overline{X}\left( s\right) \rightarrow \overline{X}\left( s\right) ,%
\text{ strongly in }L^{2}\left( \mathcal{O}\right) ,\ \text{\ for all }s\in %
\left[ 0,T\right]
\end{equation*}%
as $\mu \rightarrow 0$ and since $g:\mathbb{R} \rightarrow \mathbb{R}$ is
continuous, we have that%
\begin{equation*}
g\left( J_{\mu }\overline{X}\left( s\right) \right) \rightarrow g\left(
\overline{X}\left( s\right) \right) ,\text{ }a.e.\text{ on }\mathcal{O},\
\text{\ for all }s\in \left[ 0,T\right]
\end{equation*}%
as $\mu \rightarrow 0.$\newline
By the Egorov theorem we get that, for every $\delta >0,$ there exists a set
$E_{\delta }$ with the Lebesgue's measure $\left\vert E_{\delta }\right\vert
<\delta ,$ such that
\begin{equation*}
g\left( J_{\mu }\overline{X}\right) -g\left( \overline{X}\right) \rightarrow
0,\text{ uniformly on }\mathcal{O}\backslash E_{\delta }
\end{equation*}%
as $\mu \rightarrow 0,$ for all $t\in \left[ 0,T\right] $ and then
\begin{equation*}
\underset{\mu \rightarrow 0}{\lim }\int_{\mathcal{O}}\left[ g\left( J_{\mu }%
\overline{X}\right) -g\left( \overline{X}\right) \right] d\xi =\underset{\mu
\rightarrow 0}{\lim }\int_{E_{\delta }}\left[ g\left( J_{\mu }\overline{X}%
\right) -g\left( \overline{X}\right) \right] d\xi
\end{equation*}%
for all $t\in \left[ 0,T\right] .$

On the other hand we have%
\begin{eqnarray*}
&&\int_{E_{\delta }}\left[ g\left( J_{\mu }\overline{X}\right) -g\left(
\overline{X}\right) \right] d\xi \\
&\leq &\left( \int_{E_{\delta }}1d\xi \right) ^{1/2}\left( \int_{E_{\delta
}} \left[ g\left( J_{\mu }\overline{X}\right) -g\left( \overline{X}\right) %
\right] ^{2}d\xi \right) ^{1/2} \\
&\leq &2\left\vert E_{\delta }\right\vert ^{1/2}\left( \int_{E_{\delta }}%
\overline{X}^{4}d\xi \right) ^{1/2}\leq C\delta ,~a.e.\text{ on }\left[ 0,T%
\right] .
\end{eqnarray*}%
(Indeed, we can easily see that $\left\vert J_{\mu }\overline{X}\right\vert
_{L^{4}\left( E_{\delta }\right) }\leq \left\vert \overline{X}\right\vert
_{L^{4}\left( E_{\delta }\right) }$ by using the same argument as in \cite%
{nonneg} to obtain $3.25$).\newline
Then, we get that
\begin{equation*}
\underset{\mu \rightarrow 0}{\lim }\int_{\mathcal{O}}\left[ g\left( J_{\mu }%
\overline{X}\right) -g\left( \overline{X}\right) \right] d\xi =0,~a.e.\text{
on }\left[ 0,T\right] .
\end{equation*}%
We also know from (\ref{ito1}) that
\begin{eqnarray*}
\int_{\mathcal{O}}\left[ g\left( J_{\mu }\overline{X}\left( t\right) \right)
-g\left( \overline{X}\left( t\right) \right) \right] d\xi &\leq &2\int_{%
\mathcal{O}}\left\vert \overline{X}\left( t\right) \right\vert ^{2}d\xi \\
&\leq &2\underset{t\in \left[ 0,T\right] }{\sup }\int_{\mathcal{O}%
}\left\vert \overline{X}\left( t\right) \right\vert ^{2}d\xi \leq C,~a.e.%
\text{ on }\left[ 0,T\right] .
\end{eqnarray*}%
Using Lebesgue's dominated convergence theorem we get that
\begin{equation*}
\underset{\mu \rightarrow 0}{\lim }\int_{0}^{t}\int_{\mathcal{O}}\left[
g\left( J_{\mu }\overline{X}\left( s\right) \right) -g\left( \overline{X}%
\left( s\right) \right) \right] d\xi ds=0.
\end{equation*}

Using the same argument we get that
\begin{equation*}
\underset{\mu \rightarrow 0}{\lim }\int_{0}^{t}\int_{\mathcal{O}}\left[
g\left( J_{\mu }Y\left( s\right) +\sqrt{Q}W\left( s\right) \right) -g\left(
X\left( s\right) \right) \right] d\xi ds=0.
\end{equation*}

In order to conclude the proof we only need to mention that, for each $%
\omega \in \Omega $\ fixed, we have%
\begin{equation}
\underset{\mu \rightarrow 0}{\lim }\left\vert \sqrt{Q}W-J_{\mu }\sqrt{Q}%
W\right\vert _{L^{\infty }\left( 0,T;H_{0}^{1}\left( \mathcal{O}\right)
\right) }=0,  \label{cccc}
\end{equation}%
which is a consequence of the fact that $\sqrt{Q}W \in L^{\infty }\left(
0,T;H_{0}^{1}\left( \mathcal{O}\right) \right)$

Going back to (\ref{defff})\ we can pass to the $liminf$ for $\mu
\rightarrow 0 $ and get that, for each $\omega \in \Omega $ fixed, we have
\begin{equation*}
\left\vert \overline{X}\left( t\right) -X\left( t\right) \right\vert
_{-1}=0,
\end{equation*}%
for all $t\in \left[ 0,T\right] ,$ and that assure the uniqueness of the
solution.
\end{proof}

\begin{acknowledgement}
The author thanks the referee for the constructive comments and
suggestions.\bigskip
\end{acknowledgement}

\end{document}